\documentclass[a4paper,12pt]{amsart}
\usepackage[utf8]{inputenc}
\usepackage{amsfonts,amssymb,amsthm,amsmath,color}

\theoremstyle{plain}
\newtheorem{theorem}{Theorem}[section]

\newtheorem{proposition}[theorem]{Proposition}

\newtheorem*{problemintro*}{Problem}

\theoremstyle{definition}

\newtheorem{problem}[theorem]{Problem}



\DeclareMathOperator{\lk}{lk} 

\newcommand{\poly}{\ensuremath{\operatorname{poly}}}
\newcommand{\cc}{\ensuremath{\mathbf{c}}}
\newcommand{\dd}{\ensuremath{\mathbf{d}}}
\newcommand{\aaa}{\ensuremath{\mathbf{a}}}
\newcommand{\bb}{\ensuremath{\mathbf{b}}}
\newcommand{\flag}{\ensuremath{\rm{flag}}}
\newcommand{\Cone}{\ensuremath{\operatorname{Cone}}}
\newcommand{\Conv}{\ensuremath{\operatorname{Conv}}}

\title{Complexity yardsticks for $f$-vectors of polytopes and spheres}

\author{Eran Nevo}
\thanks{}

\begin{document}
\maketitle
\begin{center}
\emph{Dedicated to the memory of Branko Gr\"{u}nbaum}
\end{center}

\begin{abstract}
We consider geometric and computational measures of complexity for sets of integer vectors, asking for a qualitative difference between $f$-vectors of simplicial and general $d$-polytopes, as well as flag $f$-vectors of $d$-polytopes and regular CW $(d-1)$-spheres,
for $d\ge 4$.
\end{abstract}

\section{Introduction}
The face numbers of simplicial $d$-polytopes are characterized by the celebrated $g$-theorem, conjectured by McMullen~\cite{McMullen:NumberFaces-71} and proved by Stanley~\cite{Stanley:NumberFacesSimplicialPolytope-80} and Billera-Lee~\cite{BilleraLee:SufficiencyMcMullensConditions-1981}. In contrast, the $f$-vector, and the finer flag $f$-vector, of general $d$-polytopes of dimension $d \geq 4$ are not well understood, despite considerable effort, see e.g. Gr\"{u}nbaum's book~\cite[Ch.10]{Grunbaum:ConvexPolytopes-03}; likewise for regular and strongly regular CW spheres. Are there ``qualitative" differences between these sets of vectors? In this note we suggest geometric measures to make this question precise. The computational complexity aspect is also considered.
For other measures of complexity in dimension $4$, like \emph{fatness}, see e.g. Ziegler's ICM paper~\cite{Ziegler-ICM}, and e.g. \cite{Ziegler-semigroup, Sjoberg-Ziegler:semi.alg} for general $d$.
\subsection{Geometric complexity}
Let $\mathcal{F}$ be a family of graded posets of rank $d+1$ with a minimum and a maximum. For instance denote by $\mathcal{F}=\mathcal{P}^d$ (resp. $\mathcal{P}^d_s$) the face lattices of all (resp. simplicial) $d$-polytopes. Let $f(\mathcal{F})$ be the set of $f$-vectors of elements in $\mathcal{F}$, counting the number of elements in each rank $i$, denoted $f_i$, for $1\leq i\leq d$. (Note the shift of index by $1$ with respect to the dimension convention.)

For a subset $T$ of $\mathbb{R}^d$ and $t\in T$ let $\overline{\Conv(T)}$ (resp. $\overline{\Cone_t(T)}$) be the minimal closed convex set (resp. cone with apex $t$) containing $T$. Let $\sigma^d$ denote the $d$-simplex.

The following are geometric consequences of the $g$-theorem.
\begin{theorem}\label{obs:simp.density}
(1) \emph{Convex hull}: $C_d:=\overline{\Conv(f(\mathcal{P}^d_s))}=\overline{\Cone_{f(\sigma^d)}(f(\mathcal{P}^d_s))}$ is a simplicial cone of dimension $\lfloor d/2 \rfloor$.

(2) \emph{Density of rays}: for any $\epsilon>0$ and any $x\in C_d$ there exists a simplicial polytope $P\in \mathcal{P}^d_s$ such that the angle between $x-f(\sigma^d)$ and $f(P)-f(\sigma^d)$ is less than $\epsilon$.

(3) \emph{Density of points}: for any $x\in C_d$ there exists a simplicial polytope $P\in \mathcal{P}^d_s$ such that in the $l_1$-norm $||x-f(P)||_1=O(||x||_1^{1-\frac{1}{\lfloor d/2\rfloor}})=o(||x||_1)$. (The $O(\cdot)$ estimate is tight; $d\ge 2$.)

(4) \emph{Boundary polytopes}: the only polytopes $P\in \mathcal{P}^d_s$ with $f(P)$ on the boundary of $C_d$ are the $k$-stacked polytopes for some $k\le \frac{d}{2} -1$; only the $1$-stacked polytopes have $f(P)$ on an extremal ray, all are on the same ray.
\end{theorem}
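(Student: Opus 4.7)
The plan is to transport everything to ``$g$-coordinates'' via the $g$-theorem. Write $m := \lfloor d/2 \rfloor$. The $g$-theorem of Stanley and Billera--Lee provides an affine injection $\Phi:\mathbb{R}^m \hookrightarrow \mathbb{R}^d$ with $\Phi(0) = f(\sigma^d)$ that restricts to a bijection $\mathcal{M}_m \to f(\mathcal{P}^d_s)$, where $\mathcal{M}_m \subset \mathbb{R}^m_{\ge 0}$ is the set of nonnegative integer M-sequences of length $m$. All four parts then reduce to claims about $\mathcal{M}_m$.

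For (1), the inclusion $\mathcal{M}_m \subset \mathbb{R}^m_{\ge0}$ is immediate, and each extreme ray $\mathbb{R}_{\ge0}\,e_i$ of the orthant lies in $\overline{\Conv(\mathcal{M}_m)}$ because the M-sequence $(n,\binom{n+1}{2},\ldots,\binom{n+i-1}{i},0,\ldots,0)$ normalizes to $e_i$ as $n\to\infty$. Hence the closure of the convex hull is the simplicial cone $\mathbb{R}^m_{\ge0}$, and $\Phi$ sends it to $C_d$. For (2), given $y$ in the interior set $g^{(N)} := (\lfloor Ny_1\rfloor, \ldots, \lfloor Ny_m\rfloor)$; the Macaulay constraint $g^{(N)}_{i+1} \le (g^{(N)}_i)^{\langle i\rangle}$ becomes slack since its right-hand side scales as $N^{(i+1)/i}$ while its left scales linearly in $N$, so $g^{(N)}$ is eventually an M-sequence whose direction tends to $y$. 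Boundary directions are handled by sublinear perturbations of the zero coordinates, chosen precisely to satisfy the Macaulay inequalities.

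For (3), given $y\in\mathbb{R}^m_{\ge0}$ with $\|y\|_1 = M$, construct an M-sequence by sweeping $i = m-1, m-2, \ldots, 1$: take $g_m := \lfloor y_m\rfloor$, then $g_i := \max\bigl(\lfloor y_i\rfloor,\, \lceil c_i\, g_{i+1}^{\,i/(i+1)}\rceil\bigr)$, where the exponent $i/(i+1)$ comes from inverting the reverse Macaulay estimate $a^{\langle i\rangle} \sim c_i^{-(i+1)/i}\, a^{(i+1)/i}$. A short induction shows that the lift $g_i - \lfloor y_i\rfloor$ at each coordinate is bounded by $O(M^{i/m})$, so the total $l_1$-error is $\sum_{i=1}^{m-1} O(M^{i/m}) = O(M^{1-1/m})$; since $\Phi$ has constant-bounded entries this bound transports to $f$-coordinates. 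Tightness is seen at $y = M e_m$, where any M-sequence with $g_m \asymp M$ must satisfy $g_{m-1} \gtrsim M^{(m-1)/m}$.

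For (4), boundary points of $C_d$ pull back to vectors with $g_i = 0$ for some $1 \le i \le m$. The Generalized Lower Bound Theorem (Murai--Nevo) then implies $P$ is $(i-1)$-stacked, so $k$-stacked for $k = i - 1 \le m - 1$. A point lies on an extreme ray of $C_d$ exactly when all but one $g_j$ vanish; since $g_1 = 0$ forces every $g_i = 0$, the only realizable extreme ray is through $e_1$, on which lie precisely the $1$-stacked (ordinary stacked) polytopes. The main obstacle is the tight exponent in (3): the upper bound requires careful iteration of the reverse Macaulay estimate, and matching it demands inspecting the extremal rays.
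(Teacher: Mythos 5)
Your proposal is correct and follows essentially the same route as the paper: reduce via the $g$-theorem to the $g$-cone, which is the nonnegative orthant $\mathbb{R}^{\lfloor d/2\rfloor}_{\ge 0}$, use Macaulay-type estimates for density in (2)--(3) and for tightness, and invoke the (generalized) Lower Bound Theorem for (4); the only cosmetic difference is that you realize the needed $g$-vectors directly as M-sequences through Billera--Lee sufficiency, where the paper exhibits McMullen--Walkup polytopes and their connected sums. One small slip: in your sweep for (3) the lift at coordinate $i$ is in general only $O(M^{i/(i+1)})$ rather than $O(M^{i/m})$ (e.g.\ $y=(0,M,0)$ for $m=3$), but since $i/(i+1)\le (m-1)/m$ for $i\le m-1$ the total error is still $O(M^{1-1/m})$, so the conclusion is unaffected.
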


When $d\geq 4$, all analogous statements for $\mathcal{P}^d$ seem open.
Explicitly:

\begin{problem}\label{prob:polytope-density}
(1) \emph{Convex hull.} Is $\overline{\Conv(f(\mathcal{P}^d))}=\overline{\Cone_{f(\sigma^d)}(f(\mathcal{P}^d))}$?

(1') \emph{Finite generation.}  Is $\overline{\Cone_{f(\sigma^d)}(f(\mathcal{P}^d))}$ finitely generated?

(2) \emph{Ray density.}
Are the rays from $f(\sigma^d)$ through $f(P)$ for all $P\in \mathcal{P}^d$ dense in $\overline{\Cone_{f(\sigma^d)}(f(\mathcal{P}^d))}$?

(3) \emph{Point density.} Is it true that for any $x\in \overline{\Conv(f(\mathcal{P}^d))}$ there exists $P\in \mathcal{P}^d$ such that $||x-f(P)||_1=o(||x||_1)$?

(4) \emph{Boundary.} For which polytopes $P\in \mathcal{P}^d$ does $f(P)$ lie on the boundary of
$\overline{\Conv(f(\mathcal{P}^d))}$? Of $\overline{\Cone_{f(\sigma^d)}(f(\mathcal{P}^d))}$?
\end{problem}

For $d=4$ Ziegler~\cite{Ziegler-semigroup} showed that
the limits of the rays spanned by $f(\mathcal{P}^d)$ in $\overline{\Cone_{f(\sigma^d)}(f(\mathcal{P}^d))}$ form a convex set; this is open for $d>4$.
Possibly all rays in $\overline{\Cone_{f(\sigma^d)}(f(\mathcal{P}^d))}$ are limit rays, which is equivalent to a YES answer to (1,3); and just to (1) if restricting to the extremal rays.

As for (1') for $d=4$, it is not known if the \emph{fatness} parameter
$\frac{f_1+f_2}{f_0+f_3}$ is bounded above by some constant $C$.
If not, then $\overline{\Cone_{f(\sigma^4)}(f(\mathcal{P}^4))}$ would be determined, with exactly 5 facets~\cite{Bayer-flag_dim_4, Eppstien-Kuperberg-Ziegler:fat}.
Ziegler~\cite{Ziegler-fatness} showed that if $C$ exists then $C\ge 9$.

Similar questions to those in Problem~\ref{prob:polytope-density} can be asked about the set of \emph{flag} $f$-vectors of $d$-polytopes and again are open for $d\ge 4$ (and known for $d\le 3$ by Steinitz~\cite{Steinitz1906}; there the flag $f$-vector is determined by the $f$-vector, see the $cd$-index below).

For $\mathcal{F}$ as above, let $\flag(\mathcal{F})$ be the set of flag $f$-vectors of elements in $\mathcal{F}$, counting the number of chains occupying each subset of ranks $S\subseteq [d]$ (called $S$-chains).
Billera and Ehrenborg~\cite{Billera-Ehrenborg:cdMonotonicity} proved that the simplex $\sigma^d$ minimizes all components of the flag $f$-vector among $d$-polytopes, so we choose it as the apex and consider the cone $\overline{\Cone_{\flag(\sigma^d)}(\flag(\mathcal{P}^d))}$ in the flag analog of Problem~\ref{prob:polytope-density}.

For the larger family of regular CW $(d-1)$-dimensional spheres the situation is better understood.
 Denote by $\mathcal{W}^d$ the family of face posets of regular CW $(d-1)$-spheres, and let $D^d\in \mathcal{W}^d$ be the dihedral $(d-1)$-sphere -- it has exactly two cells in each dimension up to $d-1$; then $D^d$ minimizes the number of $S$-chains for any $S\subseteq [d]$.
Combining a construction of Stanley~\cite{Stanley-cd} with the nonnegativity of the $cd$-index proved by Karu~\cite{Karu-cd}, gives the following known analog of Theorem~\ref{obs:simp.density}(1).

\begin{proposition}\label{obs:CW-density}
 $W_d:=\overline{\Conv(\flag(\mathcal{W}^d))}=\overline{\Cone_{\flag(D^d)}(\flag(\mathcal{W}^d))}$ is a simplicial cone of dimension $c_d-1$, for $c_d$ the $d$th Fibonacci number (e.g. $c_4=5$).
\end{proposition}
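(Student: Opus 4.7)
My plan is to parametrize $\flag(\mathcal{W}^d)$ via the $cd$-index and then combine Karu's positivity~\cite{Karu-cd} with Stanley's realization constructions~\cite{Stanley-cd}.

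Each face poset $P\in\mathcal{W}^d$ is Eulerian, so by the Bayer--Klapper theorem its flag $f$-vector is encoded by a $cd$-index: the unique polynomial $\Phi_P=\sum_w \phi_w(P)\,w$ in non-commuting letters $c,d$ (with $\deg c=1$, $\deg d=2$) of total degree $d$ such that $\Psi_P(a,b)=\Phi_P(a+b,ab+ba)$, where $\Psi_P$ is the $ab$-index. Splitting these monomials on their last letter gives the recursion $c_d=c_{d-1}+c_{d-2}$, so there are Fibonacci-many of them, and $\flag(P)\mapsto(\phi_w(P))_w$ is a linear isomorphism from the linear span of rank-$(d+1)$ Eulerian flag $f$-vectors onto $\mathbb{R}^{c_d}$. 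Matching the coefficient of $a^d$ on both sides of the substitution shows $\phi_{c^d}(P)=f_\emptyset=1$ always, so the affine hull of $\flag(\mathcal{W}^d)$ has dimension at most $c_d-1$. Moreover, $f_S(D^d)=2^{|S|}$ gives $h_S(D^d)=1$ for every $S\subseteq[d]$, hence $\Psi_{D^d}=(a+b)^d=c^d$ and $\phi_w(D^d)=\delta_{w,c^d}$; so $\flag(P)-\flag(D^d)$ is represented in these coordinates by the tuple $(\phi_w(P))_{w\ne c^d}\in\mathbb{R}^{c_d-1}$.

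By Karu's theorem each coordinate $\phi_w(P)$ is non-negative, so $\overline{\Cone_{\flag(D^d)}(\flag(\mathcal{W}^d))}$ is contained in the closed non-negative orthant of $\mathbb{R}^{c_d-1}$, which is a simplicial cone of dimension $c_d-1$. For the reverse containment I would invoke Stanley's construction of regular CW $(d-1)$-spheres whose $cd$-indices realize each extreme ray: for every $cd$-monomial $w\ne c^d$ one exhibits a family in $\mathcal{W}^d$ (via iterated joins, suspensions, or connected sums of basic Stanley spheres) along which $\phi_w$ tends to infinity while the other $\phi_{w'}$ stay bounded, showing that each extreme ray is a limit of rays from $\flag(D^d)$ through genuine flag $f$-vectors. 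Because these flag $f$-vectors grow unboundedly along every extreme direction, rescaling $(1-1/n)\flag(D^d)+(1/n)\flag(P_n)$ shows that the closure of $\Conv(\flag(\mathcal{W}^d))$ also exhausts the orthant, yielding the two claimed equalities and the simplicial structure.

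The principal obstacle is this last step: confirming that Stanley's construction can be arranged inside $\mathcal{W}^d$ (rather than merely producing Eulerian posets) and can be iterated or connect-summed to hit each extreme ray without polluting the other non-$c^d$ coordinates. Once this realization step is in place, Karu's non-negativity closes the argument immediately.
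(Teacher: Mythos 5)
Your proposal is correct and follows essentially the same route as the paper: pass to $cd$-coordinates (sending $\flag(D^d)$ to the origin of $\mathbb{R}^{c_d-1}$), use Karu's nonnegativity for the inclusion into the orthant, and use Stanley's realization for density of the extreme rays. The step you flag as the principal obstacle is settled exactly by Stanley's explicit construction recalled in the paper: for a $cd$-word $w$ take the join $P_{w,m}$ of copies of $B_2$ (one per letter $\cc$) and of the face poset of the $m$-gon (one per letter $\dd$), which is a regular CW $(d-1)$-sphere since joins of such spheres are again such, and since $\Phi_{P*Q}=\Phi_P\Phi_Q$ the coefficient vector of $\Phi_{P_{w,m}}$ tends to the ray of $w$ as $m\to\infty$ (note the other coefficients need not stay bounded, as you wrote, only grow more slowly, which is all the ray-density argument requires).
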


The dimension $c_d-1$ was found earlier by Bayer and Billera~\cite{Bayer-Billera:generalizedD-Srelations}, and holds also for the smaller cone $\overline{\Cone_{f(\sigma^d)}(\flag(\mathcal{P}^d))}$; see also~\cite{Kalai:NewBasis}.

The flag analogs of Theorem~\ref{obs:simp.density}(2--4) are open for  $\mathcal{W}^d$, to be
discussed in Sec.~\ref{subsec:cd}.

\subsection{Computational complexity}
Computational complexity gains importance in Enumerative Combinatorics in recent years, see Pak's ICM paper~\cite{Pak-ICM} for a recent survey.
Yet, this perspective is still largely missing in $f$-vector theory.

Fix $d$ and
consider the following decision problems: given a vector $v\in \mathbb{Z}^d_{\ge 0}$ (resp. $v\in \mathbb{Z}^{2^d}_{\ge 0}$), does $v=f(P)$ (resp. $v=\flag(P)$) for some $P\in \mathcal{F}$?

For $\mathcal{F}=\mathcal{P}^d$ this is decidable, by finding all combinatorial types of $d$-polytopes with $n$ vertices -- see Gr\"{u}nbaum's book~\cite[Sec.5.5]{Grunbaum:ConvexPolytopes-03} for a proof using Tarski's elimination of quantifiers theorem.
Using the existential theory of the reals, e.g.~\cite{Canny-exist_reals, Renegar-exist_reals_ICM}, gives an algorithm that runs in time double exponential in size of the encoding of $v$ (in binary, on a deterministic Turing machine).

For $\mathcal{F}=\mathcal{P}_s^d$ this is \emph{effectively} decidable,
namely:
For a vector $v=(v_1,\ldots, v_d)\in \mathbb{Z}^d_{\ge 0}$ denote $N(v):=\sum_{i=1}^{d}\lceil\lg_2(v_i)\rceil$, the number of bits in its encoding in binary. Then,
\begin{theorem}\label{obs:g-effective}
Deciding if $v\in f(\mathcal{P}^d_s)$ can be done in polynomial time in $N(v)$.
\end{theorem}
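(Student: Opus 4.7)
The natural route is via the $g$-theorem: the vector $v=(v_1,\ldots,v_d)$ equals $f(P)$ for some $P\in\mathcal{P}_s^d$ if and only if the associated $h$-vector $(h_0,\ldots,h_d)$ satisfies the Dehn--Sommerville equalities $h_i=h_{d-i}$, is unimodal up to the middle ($h_0\le h_1\le\cdots\le h_{\lfloor d/2\rfloor}$), and its successive-difference $g$-vector $g_i:=h_i-h_{i-1}$ (with $g_0=1$) is an $M$-sequence in the sense of Macaulay. Since $d$ is fixed, I would check these three conditions in turn.

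First, compute $h$ from $v$ via the standard linear transformation whose integer coefficients depend only on $d$. Each $h_i$ has bit length $O(N(v))$, so verifying Dehn--Sommerville and the unimodality chain is merely $O(d)$ additions and comparisons on integers of bit length $O(N(v))$, trivially polynomial in $N(v)$; computing $g_i=h_i-h_{i-1}$ is immediate.

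The substantive step is the $M$-sequence test: for each $1\le i<\lfloor d/2\rfloor$, check $g_{i+1}\le g_i^{\langle i\rangle}$, where $g_i^{\langle i\rangle}$ is derived from the unique $i$-binomial expansion
\[
g_i=\binom{a_i}{i}+\binom{a_{i-1}}{i-1}+\cdots+\binom{a_j}{j},\qquad a_i>a_{i-1}>\cdots>a_j\ge j\ge 1,
\]
via $g_i^{\langle i\rangle}=\binom{a_i+1}{i+1}+\binom{a_{i-1}+1}{i}+\cdots+\binom{a_j+1}{j+1}$. I would extract this expansion greedily: binary-search for the largest $a$ with $\binom{a}{i}\le g_i$, set $a_i:=a$, replace $g_i$ by $g_i-\binom{a_i}{i}$, and recurse with index $i-1$. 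This produces at most $i\le\lfloor d/2\rfloor$ terms, and the strict inequalities $a_i>a_{i-1}>\cdots$ are automatic from the greedy choice.

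The key bit-complexity observation---and the only place care is needed---is that since $d$ is a fixed constant, every binomial $\binom{a}{j}$ that appears has $j\le\lfloor d/2\rfloor$, hence is a polynomial of constant degree in $a$ and is computable in time polynomial in the bit length of $a$. Since $\binom{a_i}{i}\le g_i\le 2^{O(N(v))}$, each $a_i$ has bit length $O(N(v))$, so each binary search terminates in $O(N(v))$ polynomial-time evaluations. Aggregating over the constantly many values of $i$ and terms in each expansion, the total work is polynomial in $N(v)$. No substantive obstacle arises; the existence direction of the $g$-theorem is invoked as a black box, since we only need to decide membership in $f(\mathcal{P}_s^d)$, not exhibit a witness polytope.
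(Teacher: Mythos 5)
Your proposal is correct and follows essentially the same route as the paper: pass from $f$ to the $h$- and $g$-vectors via the $g$-theorem, then verify the Macaulay $M$-sequence conditions by computing the greedy $i$-binomial expansions and checking $g_{i+1}\le g_i^{\langle i\rangle}$, all in time polynomial in the bit length since $d$ is fixed. Your write-up merely supplies more detail (the binary search and the constant-degree-binomial observation) than the paper's terse version of the same argument.
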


\begin{problem}\label{prob:decide-f}
Can deciding whether $v\in f(\mathcal{P}^d)$ be done in polynomial time in $N(v)$?
\end{problem}

Recognizing the cone $\overline{\Cone_{f(\sigma^d)}(f(\mathcal{P}^d))}$ may turn out undecidable:
\begin{problem}~\label{prob:cone-comput}
Fix $d\ge 4$. Is the following problem decidable?: given a hyperplane $H$ through $f(\sigma^d)$, does it support the cone $\overline{\Cone_{f(\sigma^d)}(f(\mathcal{P}^d))}$, or contain an interior ray of it?
\end{problem}

As mentioned, for $d=4$, if fatness of $4$-polytopes is unbounded then the decision problem is easy.

The analogs of Problems~\ref{prob:decide-f} and~\ref{prob:cone-comput} for flag-$f$ vectors of $d$-polytopes are open; likewise for Problem~\ref{prob:decide-f} for regular CW $(d-1)$-spheres. Considering the larger family of Gorenstein* posets, their recognition is decidable and we obtain: 
\begin{proposition}\label{obs:decideGorenstein*}
Deciding if $v$ is the flag $f$-vector of a Gorenstein* poset can be done in doubly exponential time in $N(v)$.
\end{proposition}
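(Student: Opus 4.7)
The plan is a brute-force enumerate-and-check: list every candidate graded poset whose rank sizes are consistent with $v$, test each for the Gorenstein* property, and compare its flag $f$-vector to $v$.

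First set $n:=\sum_{i=1}^{d}v_{\{i\}}$, the total number of elements that a matching poset $P$ must have strictly between its $\hat 0$ and $\hat 1$; necessarily $n\le \|v\|_1$, and since $d$ is fixed we have $\|v\|_1\le 2^{d}\cdot 2^{\max_S\lceil\lg v_S\rceil}= 2^{O(N(v))}$. Enumerate all partial orders on a labelled ground set of size $n+2$ whose rank-$i$ level has exactly $v_{\{i\}}$ elements; the crude upper bound $2^{\binom{n+2}{2}}=2^{O(n^2)}=2^{O(4^{N(v)})}=2^{2^{O(N(v))}}$ is already doubly exponential in $N(v)$ and will dominate the final estimate.

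For each candidate $P$, check the defining conditions of Gorenstein* (following Stanley): $P$ is graded with the prescribed rank function, $P$ is Eulerian (every length-$\ge 2$ interval contains the same number of elements in even and odd ranks), and for every open interval $(x,y)$ of $P$ the order complex $\Delta((x,y))$ is Cohen--Macaulay with the reduced homology of a sphere of the appropriate dimension. The first two are obviously polynomial in $|P|$. For the third, the number of intervals is $O(n^{2})$; for each one, its order complex has at most $2^{n}$ faces, and simplicial homology of a complex of size $M$ can be computed in $\operatorname{poly}(M)$ time via the Smith normal form, so the Gorenstein* test runs in time $2^{O(n)}$. If $P$ passes, compute $\flag(P)$ by enumerating $S$-chains for each $S\subseteq[d]$ (a constant number of subsets), again in time $2^{O(n)}$, and compare to $v$. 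Output YES iff some $P$ matches.

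The total time is $2^{O(n^2)}\cdot 2^{O(n)}=2^{O(n^2)}=2^{2^{O(N(v))}}$, so doubly exponential in $N(v)$.

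The genuine obstacle is the Gorenstein* test itself, but this is where decidability of the problem enters and is resolved by Stanley's homological characterization together with the polynomial-time computability of simplicial homology. All other steps are routine bookkeeping, and the enumeration of posets is what forces the bound to be doubly rather than singly exponential. Sharpening the complexity would require either a structural enumeration of Gorenstein* posets with prescribed rank sizes, or a direct combinatorial characterization of $\flag(\mathcal{F})$ — neither of which is currently available.
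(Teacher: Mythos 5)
Your proposal is correct and follows essentially the same route as the paper: brute-force enumeration of the doubly-exponentially many candidate posets with the prescribed rank sizes, testing Gorenstein*-ness by computing rational homology of (the order complexes of) all open intervals, and comparing flag $f$-vectors, with the poset enumeration dominating the running time. The only differences are bookkeeping choices (you bound the number of posets by $2^{\binom{n+2}{2}}$ while the paper encodes posets via subsets of chains), which do not affect the doubly exponential bound.
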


Is there an effective decision algorithm? In the case $d=4$ the flag $f$-vectors in $\mathcal{W}^4$ are characterized~\cite{Murai-Nevo:rank5cd}; yet it is not clear whether the numerical conditions given can be verified effectively; see Problem~\ref{prob:cd-hardness}.

\section{Preliminaries}
\subsection{$g$-numbers} For $P\in \mathcal{P}^d_s$ with the convention of the introduction, $f_i(P)$ denotes the number of rank $i$ (i.e. $(i-1)$-dimensional) faces of $P$. Define the numbers $h_i(P)$ ($i=0,1\ldots,d$)
by
$$x^d \sum_{i=0}^d h_i(P)(\frac{1}{x})^i =
(x-1)^d \sum_{i=0}^d f_{i}(P)(\frac{1}{x-1})^i.$$
Note that the \emph{$f$-vector} of $P$, $f(P)=(f_0,\ldots,f_{d})$, and its \emph{$h$-vector}
$h(P)=(h_0,h_1,\ldots,h_d)$, are obtained one from the other by applying an invertible linear transformation. Thus, the following theorem indeed characterizes the face numbers of simplicial polytopes.
\begin{theorem}[$g$-theorem~\cite{BilleraLee:SufficiencyMcMullensConditions-1981, Stanley:NumberFacesSimplicialPolytope-80}]\label{thm:g-thm}
An integer vector $h=(h_0,\ldots,h_d)$ is the $h$-vector of
a simplicial $d$-polytope iff the following two conditions hold:

(1) $h_i=h_{d-i}$ for every $0\leq i\leq \lfloor\frac{d}{2}\rfloor$, and

(2) $(h_0=1,h_1-h_0,\ldots,h_{\lfloor\frac{d}{2}\rfloor} - h_{\lfloor\frac{d}{2}\rfloor -1})$ is an M-sequence.
\end{theorem}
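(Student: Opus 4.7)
The proof naturally splits into \emph{necessity} (every $h$-vector of a simplicial $d$-polytope satisfies (1) and (2), due to Stanley) and \emph{sufficiency} (every such sequence is realised, due to Billera--Lee). I plan to attack these by very different techniques.

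For necessity, I would first assume that $P$ has rational vertices and then deduce the general case by a small perturbation that preserves the combinatorial type, hence $h(P)$. Given rational $P$, construct the projective toric variety $X_P$; standard toric geometry shows that its even cohomology $A^\bullet := H^{2\bullet}(X_P;\mathbb{R})$ is a graded Artinian $\mathbb{R}$-algebra generated in degree one, with $\dim A^i = h_i(P)$. Poincar\'e duality on $X_P$ immediately gives (1). For (2), apply the hard Lefschetz theorem (valid here because $X_P$ is $\mathbb{Q}$-factorial, using intersection cohomology in the singular case) to obtain a linear form $\omega \in A^1$ such that multiplication by $\omega^{d-2i}\colon A^i\to A^{d-i}$ is an isomorphism for $0\le i\le \lfloor d/2\rfloor$. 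Consequently the quotient $B:=A/(\omega)$ is a standard graded Artinian $\mathbb{R}$-algebra whose Hilbert function is precisely $(h_0,\,h_1-h_0,\,\ldots,\,h_{\lfloor d/2\rfloor}-h_{\lfloor d/2\rfloor-1},\,0,\ldots)$, and Macaulay's characterization of Hilbert functions of quotients of polynomial rings yields the M-sequence condition.

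For sufficiency, given an M-sequence $(1,g_1,\ldots,g_{\lfloor d/2\rfloor})$, I would carry out the Billera--Lee construction. Start with a cyclic polytope $C(n,d+1)$ for $n$ sufficiently large, list its facets in reverse-lexicographic order according to the cyclic vertex ordering, and use the Macaulay correspondence between M-sequences and compressed multicomplexes to pick an initial segment $\mathcal{I}$ of facets whose ``missing face'' pattern encodes the prescribed $g$-vector. Show that $\bigcup \mathcal{I}$ is a shellable simplicial $d$-ball $B$ inside $\bd C(n,d+1)$, and then realise the simplicial $d$-sphere obtained by gluing $B$ to its complement along their common $(d-2)$-sphere as the boundary of an actual simplicial $d$-polytope. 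This is done geometrically by iteratively placing new vertices ``beyond'' appropriate facets of a starting polytope, where the neighbourly structure of the cyclic polytope ensures at each step that the shelling order corresponds to a convex-realisable sequence of stacking operations. A direct face count then verifies that the resulting polytope has $h_i - h_{i-1} = g_i$ as required.

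The main obstacle is the necessity direction, specifically establishing hard Lefschetz for simplicial projective toric varieties, since they are in general singular; I expect to reduce to the smooth case via toric resolution of singularities together with the decomposition theorem, or to invoke algebraic proofs of hard Lefschetz for simplicial fans. Handling the non-rational case is a further complication, for which McMullen's polytope/weight algebra framework provides a purely combinatorial surrogate and avoids algebraic geometry altogether. The sufficiency direction, while combinatorially delicate, is essentially an explicit construction whose correctness reduces to careful bookkeeping of shelling numbers of cyclic polytopes via the Kruskal--Katona/Macaulay compression machinery.
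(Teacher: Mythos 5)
The paper states the $g$-theorem as a cited background result (attributed to Stanley and Billera--Lee) and gives no proof of it, so there is no in-paper argument to compare against. Your sketch is a fair high-level summary of the two classical proofs: Stanley's necessity argument via the projective toric variety, Poincar\'e duality for the Dehn--Sommerville relations, hard Lefschetz to produce a Lefschetz element, and Macaulay's theorem for the M-sequence condition; and Billera--Lee's sufficiency argument via reverse-lexicographic initial segments of facets of a cyclic polytope and a geometric realization by successive ``beyond'' placements. Two small corrections. First, if $B$ is a shellable $d$-ball inside $\bd C(n,d+1)$ (a simplicial $d$-sphere), then $B$ and its complementary ball meet along the common $(d-1)$-sphere $\bd B$, not a $(d-2)$-sphere; the $(d-1)$-sphere $\bd B$ is the object ultimately realized as the boundary of the desired simplicial $d$-polytope. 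Second, since the fan of a simplicial polytope is simplicial, the associated projective toric variety is $\mathbb{Q}$-factorial and rationally smooth (an orbifold), so intersection cohomology coincides with ordinary rational cohomology; Stanley's original argument used hard Lefschetz for projective $V$-manifolds directly rather than the decomposition theorem, though your proposed route through IH would also work. Beyond these points your outline is consistent with the literature, but it is of course a roadmap rather than a proof: the shellability of the Billera--Lee ball, the correspondence between compressed multicomplexes and initial segments, and the convex realizability at each stacking step all require the detailed bookkeeping you acknowledge.
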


\subsection{Gorenstein* posets.}
 A poset $P$ with minimum $\hat{0}$ and maximum $\hat{1}$ is \emph{Gorenstein*} if the reduced order complex $\mathcal{O}(P)$, consisting of all chains in $P\setminus\{\hat{0},\hat{1}\}$, is a Gorenstein* simplicial complex. Namely, for any face $F\in \mathcal{O}(P)$ including the empty one, the link $\lk_{\mathcal{O}(P)}(F)$ has dimension $\dim(\mathcal{O}(P))-|F|$ and is homologous to a rational $(\dim(\mathcal{O}(P))-|F|)$-sphere.
For example, all regular CW spheres are Gorenstein*, thus also all polytope face lattices.

\subsection{$cd$-index.}
For fixed $d$ and $P\in \mathcal{W}^d$, or any Gorenstein* poset of rank $d+1$, we recall its $cd$-index, introduced by Fine.
For a word $w=w_1\cdots w_d$ over alphabet $\{\aaa, \bb\}$, let $S(w):=\{i:\ w_i=b\}$ and for a subset $S\subseteq [d]$ let $w(S)$ be the unique word $w$ over $\{\aaa, \bb\}$ with $d$ letters such that $S(w)=S$.
Define polynomials in non-commuting variables $\Gamma_P(\aaa,\bb):=\sum_{S\subseteq [d]}f_S(P)w(S)$ and $\Psi_P(\aaa,\bb):=\Gamma_P(\aaa - \bb, \bb)$.
It turns out that for $\cc=\aaa+\bb$ of degree $1$ and $\dd=\aaa\bb+\bb\aaa$ of degree $2$, $\Psi_P(\aaa,\bb)=\Phi_P(\cc, \dd)$; this uniquely defined polynomial $\Phi_P$ of homogenous degree $d$ in non-commuting variables $\cc$ and $\dd$ is called the $cd$-index of $P$.
Stanley~\cite{Stanley-cd} proved for $P\in \mathcal{P}^d$, and Karu~\cite{Karu-cd} for any Gorenstein* poset, that:
\begin{theorem}\label{thm:cd>=0}
For any Gorenstein* poset $P$, all coefficients of its $cd$-index $\Phi_P$ are nonnegative.
\end{theorem}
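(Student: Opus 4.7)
My plan is to construct, for each element $x$ of the poset $P$, a finitely generated graded module $\mathcal{L}(x)$ over a polynomial ring of Krull dimension $\rank(x)$, arranged into a sheaf $\mathcal{L}$ on $P$ that plays the role of intersection cohomology; a Hard Lefschetz theorem for $\mathcal{L}$ will then force the $cd$-coefficients to be nonnegative by realizing them as dimensions of graded pieces. The preliminary step is combinatorial: rewrite $\Phi_P(\cc,\dd) = \sum_w \alpha_w(P)\, w$ over $cd$-monomials $w$ of total degree $d$, and express each $\alpha_w(P)$ as an explicit $\Z$-linear combination of the flag $h$-numbers $h_S(P) := \sum_{T \subseteq S}(-1)^{|S\setminus T|}f_T(P)$, so that nonnegativity of $\Phi_P$ reduces to a concrete positivity statement about these combinations of the $h_S(P)$.

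The inductive construction of $\mathcal{L}$ proceeds along rank, specifying $\mathcal{L}(x)$ so that: (i) the restriction maps $\mathcal{L}(x) \twoheadrightarrow \mathcal{L}(y)$ for coverings $y < x$ are surjective and compatible along chains; (ii) the graded Poincar\'e polynomial of $\mathcal{L}(x)/(\theta)\mathcal{L}(x)$, for a generic linear system of parameters $\theta$, records the contribution of the interval $[\hat 0, x]$ to the flag $h$-vector; and (iii) $\mathcal{L}(x)$ is minimal with these properties. Existence and uniqueness of $\mathcal{L}$ up to isomorphism should follow by a Bressler--Lunts style argument on pure sheaves. The crucial step is then to prove that, because $P$ is Gorenstein*, the global quotient $\mathcal{L}(\hat 1)/(\theta)\mathcal{L}(\hat 1)$ satisfies Poincar\'e duality together with Hard Lefschetz: multiplication by a generic linear form $\ell$ is an isomorphism between complementary degrees. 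The resulting symmetric, unimodal Betti numbers can then be regrouped by $\cc,\dd$-monomial to yield $\alpha_w(P) \ge 0$.

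The main obstacle is the combinatorial Hard Lefschetz itself. In the classical polytope case one can invoke Hodge theory on a projective toric variety, but for a general (non-rational, non-polytopal) Gorenstein* poset no ambient smooth variety is available, so Hard Lefschetz and Poincar\'e duality must be bootstrapped together by simultaneous induction on $\rank$, using the Gorenstein* hypothesis to furnish a nondegenerate pairing on every link $\lk_{\mathcal{O}(P)}(x)$. The delicate point in the induction is choosing $\ell$ generically enough that its multiplication is injective in degrees below the middle on \emph{every} stalk simultaneously; this is where the global Gorenstein* assumption, as opposed to just local sphericity, is indispensable, and it is the most technical part of the argument.
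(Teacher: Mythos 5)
First, a point of reference: the paper does not prove this theorem at all; it quotes it, citing Stanley for polytopes and Karu for general Gorenstein* posets. So your attempt must be measured against Karu's published argument, whose general setting you do echo (stalks that are modules over polynomial rings of rank-many variables, Bressler--Lunts style minimal/pure sheaves on the poset, Poincar\'e duality supplied by the Gorenstein* hypothesis). As a proof, however, your plan has a genuine gap, and its final reduction is wrong. The decisive step you propose --- establish Poincar\'e duality plus hard Lefschetz for $\mathcal{L}(\hat 1)/(\theta)\mathcal{L}(\hat 1)$ and then ``regroup'' the resulting symmetric, unimodal Betti numbers by $cd$-monomial --- does not yield the theorem. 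Symmetry and unimodality of a \emph{single} Poincar\'e polynomial gives nonnegativity of the toric $g$-vector, a much coarser invariant than the $cd$-index; the coefficients $\alpha_w(P)$ are $\Z$-combinations of flag $h$-numbers that are neither recoverable from nor dominated by one sequence of IH Betti numbers, so no regrouping of one unimodal symmetric sequence can give $\alpha_w(P)\ge 0$ for every word $w$. In Karu's actual proof each $cd$-coefficient is realized \emph{separately}, as the dimension of a module obtained by applying operations attached to the letters $\cc$ and $\dd$ iteratively along the word $w$ to sheaves on the poset, with duality invoked at every stage of the iteration; nothing in your outline produces this word-by-word construction.

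Second, the hard Lefschetz you intend to ``bootstrap'' is not merely the most technical part --- it is not available in this generality, and the statement itself is problematic. For an arbitrary Gorenstein* poset there is no ambient vector space and no conewise-linear structure, so ``multiplication by a generic linear form $\ell$'' on $\mathcal{L}(\hat 1)/(\theta)\mathcal{L}(\hat 1)$ has no defined meaning; Karu's combinatorial hard Lefschetz theorem is proved for fans (equivalently, non-rational polytopes), and a hard Lefschetz theorem for general Gorenstein* posets would in particular give nonnegativity of their toric $g$-vectors, which is not known in this generality. This is precisely why the known proof of $cd$-nonnegativity is engineered to rely only on Poincar\'e-duality-type statements for poset sheaves rather than on a global Lefschetz isomorphism. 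So the two pillars of your plan are, respectively, stronger than what is known (the global HL) and insufficient even if granted (the regrouping step); what remains is a reasonable but standard setup that by itself does not prove the theorem.
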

For $B_2$ the boolean lattice on two atoms, $Q_m$ the face poset of the $m$-gon,
and any $cd$-word $w=w_1\cdots w_k$, Stanley~\cite{Stanley-cd} considered the join poset $P_{w,m}=P_1*\ldots* P_k$ where $P_i=B_2$ if $w_i=\cc$ and $P_i=Q_m$ if $w_i=\dd$. It is a regular CW sphere as a join of such. As $\Phi_{P*Q}=\Phi_P\Phi_Q$ holds for any posets $P,Q$ admitting a $cd$-index, Stanley
concluded that when $m$ approaches infinity the coefficient vector of $\Phi_{P_{w,m}}$ approached the ray spanned by the $w$th coordinate.
This explains Proposition~\ref{obs:CW-density}.

\section{Proofs and Discussion}\label{sec:3}
\subsection{Consequences of the $g$-theorem.}
\begin{proof}[Proof of Theorem~\ref{obs:g-effective}]
Recall the $g$-theorem, Theorem~\ref{thm:g-thm}.
Denote $g_i=h_i-h_{i-1}$ for $0<i\le \lfloor d/2\rfloor$.
Checking whether $(1,g_1,\ldots,g_{\lfloor d/2\rfloor})$ is an M-sequence can be done in polynomial time in the size of the encoding of $g:=(g_1,\ldots, g_{\lfloor d/2\rfloor})$ in binary. Indeed, we recall the trivial algorithm one needs to run: (i) for each $i$ produce the $i$th Macaulay representation (see e.g. \cite{Stanley:CombinatoricsCommutativeAlgebra-96} for a definition) of $g_i$ in $\poly(\lg_2 g_i)$-time, (ii) then check if the Macaulay inequalities $g_i^{<i>}\ge g_{i+1}$ hold in $\poly(\max(\lg_2 g_i, \lg_2 g_{i+1})$-time. This verifies Theorem~\ref{obs:g-effective}.
\end{proof}

\begin{proof}[Proof of Theorem~\ref{obs:simp.density}]
The cone $\overline{\Cone_{f(\sigma^d)}(f(\mathcal{P}^d_s))}$ is affinely equivalent to the $g$-cone with apex the origin $\overline{\Cone_0(g(P):\ P\in \mathcal{P}^d_s)}$, which is simply the nonnegative orthant $A_{\lfloor d/2\rfloor}$ in $\mathbb{R}^{\lfloor d/2\rfloor}$. Thus we verify Theorem~\ref{obs:simp.density} by considering the analogous statements for $g$-vectors $g(P)$ and the cone $A_{\lfloor d/2\rfloor}$ rather than $f$-vectors $f(P)$ and the cone $C_d$.

The McMullen-Walkup polytopes~\cite{McMullenWalkup:GLBC-71} approach the extremal rays of $A_{\lfloor d/2\rfloor}$, verifying (1).

For (2), first recall the \emph{connected sum} construction (with respect to a given facet): for two  $d$-polytopes $P_1$ and $P_2$, after applying a projective transformation to one of them, they can be
glued along a common facet (namely, $(d-1)$-face) $\sigma$ to form a new convex $d$-polytope $P=P_1\#_{\sigma}P_2$. Combinatorially, the face lattices are related by
  $\partial P=(\partial P_1\cup_{\sigma}\partial P_2)\setminus \{\sigma\}$.
  Note that on the level of face lattices, the operations $\#_{\sigma}$ are associative and commutative, so we omit the order of summands and of operations from the language.

Now, take connected sum of an appropriate number of copies of appropriate  McMullen-Walkup polytopes  to show that any ray in $A_{\lfloor d/2\rfloor}$ is a limit of a sequence of distinct rays spanned by the $g(P)$ in $A_{\lfloor d/2\rfloor}$.
Indeed, the $g$-vectors sum up under connected sum: $g(P_1\#_{\sigma} P_2)=g(P_1)+ g(P_2)$.

More strongly, for (3) one requires the M-sequence inequalities in the $g$-theorem:
consider the vector $x(a)=(0,0,\ldots,0,a)$ in $A_{\lfloor d/2\rfloor}$ for $a>>1$ and the least $M$-sequence with respect to the reversed lexicographic order such that its $\lfloor d/2 \rfloor$th coordinate equals $a$, denoted
$M(a)$. The Macaulay inequalities show that $||x(a)-M(a)||_1=\Theta(a^{\frac{\lfloor d/2\rfloor -1}{\lfloor d/2\rfloor}})$ (for all $d\ge 2$).
Further, it follows that for any vector $x=(x_1,x_2,\ldots,x_{\lfloor d/2\rfloor})\in A_{\lfloor d/2\rfloor}$ there exists an $M$-sequence $M(x)$ with
$||x-M(x)||_1=O(||x||_1^{\frac{\lfloor d/2\rfloor -1}{\lfloor d/2\rfloor}})$, e.g. by repeating the above argument for the coordinate vectors $x_ie_i$ and summing up.
No better estimate is possible: if $M=(v_1,v_2\ldots,v_{\lfloor d/2\rfloor})\in A_{\lfloor d/2\rfloor}$ is an $M$-sequence with $||x(a)-M||_1=o(a^{1-\frac{1}{\lfloor d/2\rfloor}})$ then $v_{\lfloor d/2\rfloor -1}=o(a^{1-\frac{1}{\lfloor d/2\rfloor}})$ so by the Macaulay inequalities $v_{\lfloor d/2\rfloor}=o(a)$, and we get the contradiction $||M-x(a)||_1\ge |a-o(a)|=\Omega(||x(a)||_1)$.

For (4) consider the Macaulay conditions again. We see that the only $g$-vectors of simplicial $d$-polytopes $g(P)$ on the boundary of $A_{\lfloor d/2\rfloor}$ are those of the form $(a_1,\ldots,a_k,0,\ldots, 0)$ for positive $a_i$s, corresponding exactly to $(k-1)$-stacked polytopes by~\cite{McMullenWalkup:GLBC-71, Murai-Nevo:GLBC}.
The only $g(P)$ on an extremal ray are of the form $(a_1,0,\ldots, 0)$, corresponding to $1$-stacked polytopes by the
Lower Bound Theorem~\cite{Barnette:LBT-73, Kalai:LBT}. This completes the verification of Theorem~\ref{obs:simp.density}.
\end{proof}

\subsection{$cd$-index of regular CW spheres.}\label{subsec:cd}
For a flag analog of Theorem~\ref{obs:simp.density}(2--4) for $W_d$
we first linearly transform to the $cd$-nonnegative orthant $A_{c_d-1}$ in $\mathbb{R}^{c_d-1}$ and consider $cd$-indices rather than flag $f$-vectors.
As for (2),
 we lack the needed connected sum type constructions for complexes in $\mathcal{W}^d$. For example, is it possible to modify a poset $P\in \mathcal{W}^d$ to another poset $P'\in \mathcal{W}^d$ such that (i) $P'$ has a top dimensional cell whose boundary is dihedral (i.e.  isomorphic to $D^{d-1}$), and (ii) the coefficients are close, namely, for any $cd$-word $w$, $|[w]_{\Phi_{P'}}-[w]_{\Phi_P}|=o(\Phi_P(1,1))$?

 If Yes then ray density as asserted in (2) would follow, by taking connected sum over dihedral cells.

As for (4),
the inequality on the $cd$-index by Murai and Yanagawa~\cite[Thm.1.4]{Murai-Yanagawa} shows that the only extremal rays in $W_d$ realized by posets in $\mathcal{W}^d$ are those corresponding to $cd$-words with a single $\dd$.
For $d=4$, thanks to a complete characterization of the possible $cd$-indices~\cite{Murai-Nevo:rank5cd}, (4) is answered. In particular,
on the facet of $A_{c_4-1}$ with coordinate coefficient $[\cc\dd\cc]=0$ the points are sparse. They in fact lie in ``lower dimension", where the $\dd^2$ coordinate is uniquely determined by the coordinates $\cc^2\dd$ and $\dd\cc^2$ via the coefficient equation $[\dd^2]=[\cc^2\dd][\dd\cc^2]$.

Next we consider the  characterization in~\cite{Murai-Nevo:rank5cd} from the computational complexity point of view; the part relevant for a potential computational hardness result is:
\begin{theorem}[Murai-N.~\cite{Murai-Nevo:rank5cd}]
Let $\Phi$ be a $cd$-polynomial of homogenous degree $4$ with nonnegative integer coefficients satisfying $[\cc^4]=1$ and $[\cc\dd\cc]=1$. Then $\Phi=\Phi(P)$ for some Gorenstein* poset of rank $5$ (or even regular CW $3$-sphere) iff there exist nonnegative integers $x_1,x_2,x_3$ and $y_1,y_2,y_3$ such that
\begin{equation}\label{eq:cdc=1}
x_1+x_2+x_3=[\cc^2\dd],\  y_1+y_2+y_3=[\dd \cc^2],\  x_1y_1+x_2y_2+x_3y_3=[\cc^2\dd][\dd \cc^2]-[\dd^2].                                                                                                                                                   \end{equation}
\end{theorem}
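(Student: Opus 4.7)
My plan is to prove the two implications separately. I would start with the easier backward direction (realizability), which is constructive. Given nonnegative integers $x_i, y_i$ satisfying \eqref{eq:cdc=1}, I would build an explicit regular CW $3$-sphere realizing $\Phi$. The natural ingredients are Stanley's join spheres $P_{w,m}$ (see the discussion after Theorem~\ref{thm:cd>=0}) and the connected sum operation along dihedral top-dimensional cells, which adds $cd$-indices modulo a standard correction. I would assemble three atomic spheres, one for each index $i$, each contributing the target $(x_i, y_i, x_iy_i)$ to the coordinates $([\cc^2\dd], [\dd\cc^2], \cdot)$, and glue them to a base sphere with $cd$-index $\cc^4 + \cc\dd\cc$ (realized by a simplex-like $3$-sphere joined with a single $Q_m$-factor). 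Rewriting the third equation as $[\dd^2] = [\cc^2\dd][\dd\cc^2] - \sum_i x_iy_i$ shows that $[\dd^2]$ records exactly the ``cross'' edge--$2$-face incidences between distinct classes, which matches the way $[\dd^2]$ additively combines under connected sum over dihedral facets.

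The forward direction (necessity) is more delicate. Starting from a regular CW $3$-sphere $P$ realizing $\Phi$, I would extract the partition into three classes from the combinatorics of $P$. The hypothesis $[\cc\dd\cc] = 1$ translates, via $\Psi_P(\aaa,\bb) = \Phi_P(\cc,\dd)$, into a specific linear relation among the flag numbers $f_S(P)$; combined with the Bayer--Billera linear relations and the nonnegativity provided by Theorem~\ref{thm:cd>=0}, this should sharply constrain the local structure of the $2$-skeleton. I expect $[\cc^2\dd]$ and $[\dd\cc^2]$ to carry interpretations as normalized counts of edges and $2$-faces of particular incidence types, and the hypothesis $[\cc\dd\cc]=1$ to force the remaining edges and $2$-faces to organize into at most three classes, producing $x_i$ and $y_i$. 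The third equation then reads as the count of ``same-class'' edge--$2$-face flags, with $[\dd^2]$ as the complementary ``cross-class'' count.

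The main obstacle is exactly the step of showing that three classes (rather than more) suffice in the forward direction. I expect the proof in \cite{Murai-Nevo:rank5cd} to rely on the full topology of regular CW $3$-spheres, in particular on the fact that each $2$-face is incident to exactly two $3$-cells, to pin down the partition into a small fixed number of pieces. This combinatorial--topological bound is the essential content of the theorem and is not evident from the $cd$-inequalities alone; once it is in place, both directions of the equivalence reduce to bookkeeping with the connected sum.
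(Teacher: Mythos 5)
The paper does not prove this theorem; it is quoted from \cite{Murai-Nevo:rank5cd} (and the surrounding text only uses it to pose Problem~\ref{prob:cd-hardness}), so there is no in-paper proof to compare against. Judged on its own, your proposal has concrete gaps in both directions. For sufficiency, the bookkeeping of your gluing scheme is wrong: a connected sum along a common facet is (up to the fixed correction coming from the shared cell) \emph{additive} on flag $f$-vectors, since a chain of faces is totally ordered and hence cannot mix cells from the two summands except through the glued boundary; no ``cross'' $\dd^2$-flags are created. So three pieces each contributing $(x_i,y_i,x_iy_i)$ would yield $[\dd^2]=\sum_i x_iy_i$, whereas the theorem requires $[\dd^2]=[\cc^2\dd][\dd\cc^2]-\sum_i x_iy_i=\sum_{i\neq j}x_iy_j$, i.e.\ exactly the cross terms. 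Those cross terms come from join-type constructions (e.g.\ $\Phi_{Q_m\join Q_n}=\cc^4+(n-2)\cc^2\dd+(m-2)\dd\cc^2+(m-2)(n-2)\dd^2$), so the realization must interleave the polygon factors rather than concatenate independent spheres by connected sum. In addition, the natural pieces ($Q_m\join Q_n$ and the base $B_2\join Q_3\join B_2$ with $\Phi=\cc^4+\cc\dd\cc$, which does exist) have no dihedral facet to glue along, and the availability of dihedral-facet connected sums in $\mathcal{W}^d$ with controlled $cd$-index is precisely what Section~\ref{subsec:cd} of this paper flags as open; you also never verify that your assembly keeps $[\cc\dd\cc]=1$.

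For necessity, you state the heart of the matter --- that $[\cc\dd\cc]=1$ forces the relevant incidences to organize into at most three classes, producing the $x_i,y_i$ --- only as an expectation and explicitly defer it to \cite{Murai-Nevo:rank5cd}. Since that bound (three summands rather than arbitrarily many) is exactly what makes the diophantine condition \eqref{eq:cdc=1} nontrivial (and what feeds the hardness discussion in Problem~\ref{prob:cd-hardness}), the forward direction of your argument is not a proof but a restatement of the goal. As written, neither implication is established.
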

\begin{problem}\label{prob:cd-hardness}
Let $N=\lceil\lg_2 [\cc^2\dd]\rceil +\lceil\lg_2 [\dd\cc^2]\rceil +\lceil\lg_2 [\dd^2]\rceil$.
Can it be decided in $\poly(N)$-time whether the diophantine system Eq.~(\ref{eq:cdc=1}) has a solution?
\end{problem}
Deciding in $\exp(N)$-time is trivial. Recall that some binary diophantine quadratics are known to be NP-complete~\cite{Manders-Adleman:NPcompleteQuadrics}.
Next we consider arbitrary Gorenstein* posets.
\begin{proof}[Proof of Proposition~\ref{obs:decideGorenstein*}]
All Gorenstein* posets of fixed rank with given flag $f$-vector of binary bit complexity $N$ can be obtained in $\exp(\exp(N))$-time. Indeed, the total number of possible chains of faces is $\Pi_i f_{\{i\}}=\exp(O(N))$, and each potential poset
corresponds to a subset, so all together we have to consider $\exp(\exp(O(N)))$ number of posets $P$. For each
 $P$ we compute the cellular homology groups over say the field of rationals, for all intervals in $P$; they are $\exp(O(N))$ many. For each interval the computation is polynomial in the size of the interval, so takes $\poly(\exp(O(N)))$ time.
 Proposition~\ref{obs:decideGorenstein*} follows.
 \end{proof}
Possibly a decision can be made in $\poly(N)$-time.

\textbf{Acknowledgements.}
I thank the anonymous referees for helpful suggestions on the presentation.
\bibliographystyle{plain}
\bibliography{biblioB}

\end{document}